\newcommand{\bdry}[1]{\partial #1}
\newcommand{\dint}{\ds{\int}}
\newcommand{\dist}[2]{\text{dist}\, (#1,#2)}
\newcommand{\ds}[1]{\displaystyle #1}
\newcommand{\eps}{\varepsilon}
\newcommand{\id}[1]{id_{#1}}
\newcommand{\incl}{\subset}
\newcommand{\norm}[2][]{\left\|#2\right\|_{#1}}
\renewcommand{\O}{\text{O}}
\renewcommand{\o}{\text{o}}
\newcommand{\PS}[1]{$(\text{PS})_{#1}$}
\newcommand{\QED}{\mbox{\qedhere}}
\newcommand{\restr}[2]{\left.#1\right|_{#2}}
\newcommand{\seq}[1]{\left(#1\right)}
\newcommand{\set}[1]{\left\{#1\right\}}
\newcommand{\N}{\mathbb N}
\newcommand{\R}{\mathbb R}
\newcommand{\RP}{\R \text{P}}
\newcommand{\Z}{\mathbb Z}
\newcommand{\A}{{\cal A}}
\newcommand{\F}{{\cal F}}
\def\cprime{$'$}
\DeclareMathOperator{\divg}{div}
\newenvironment{enumroman}{\begin{enumerate}

}{\end{enumerate}}
\newenvironment{properties}[1]{\begin{enumerate}

}{\end{enumerate}}
\newtheorem{lemma}{Lemma}[section]
\newtheorem{proposition}[lemma]{Proposition}
\newtheorem{theorem}[lemma]{Theorem}
\theoremstyle{remark}
\newtheorem{remark}[lemma]{Remark}
\numberwithin{equation}{section}
\title{\bf On a class of critical $(p,q)$-Laplacian problems\thanks{{\em MSC2010:} Primary 35J92, 35B33, Secondary 58E05
\newline \indent\; {\em Key Words and Phrases:} $(p,q)$-Laplacian problems, critical Sobolev exponent, nontrivial solutions, critical point theory, cohomological index}}
\author{\bf Pasquale Candito\\
Universit\`{a} degli Studi di Reggio Calabria\\
89100 Reggio Calabria, Italy\\
\em pasquale.candito@unirc.it\\
[\bigskipamount]
\bf Salvatore A. Marano\\
Universit\`{a} degli Studi di Catania\\
95125 Catania, Italy\\
\em marano@dmi.unict.it\\
[\bigskipamount]
\bf Kanishka Perera\thanks{This work was completed while the third-named author was visiting Universit\`{a} di Reggio Calabria, and he is grateful for the kind hospitality of the host university.}\\
Florida Institute of Technology\\
Melbourne, FL 32901, USA\\
\em kperera@fit.edu}
\date{}
\begin{document}

\maketitle

\begin{abstract}
We obtain nontrivial solutions of a critical $(p,q)$-Laplacian problem in a bounded domain. In addition to the usual difficulty of the loss of compactness associated with problems involving critical Sobolev exponents, this problem lacks a direct sum decomposition suitable for applying the classical linking theorem. We show that every Palais-Smale sequence at a level below a certain energy threshold admits a subsequence that converges weakly to a nontrivial critical point of the variational functional. Then we prove an abstract critical point theorem based on a cohomological index and use it to construct a minimax level below this threshold.
\end{abstract}

\section{Introduction and main results}

The $(p,q)$-Laplacian operator
\[
\Delta_p\, u + \Delta_q\, u = \divg \left[\left(|\nabla u|^{p-2} + |\nabla u|^{q-2}\right) \nabla u\right]
\]
appears in a wide range of applications that include biophysics \cite{MR527914}, plasma physics \cite{MR901723}, reaction-diffusion equations \cite{MR518461,MR2126276}, and models of elementary particles \cite{MR0174304,MR1626832,MR1785469}. Consequently, quasilinear elliptic boundary value problems involving this operator have been widely studied in the literature (see, e.g., \cite{MR1929880,MR2536276,MR2834776,MR2988766} and the references therein). In particular, the critical $(p,q)$-Laplacian problem
\[
\left\{\begin{aligned}
- \Delta_p\, u - \Delta_q\, u & = \mu\, |u|^{r-2}\, u + |u|^{p^\ast - 2}\, u && \text{in } \Omega\\[10pt]
u & = 0 && \text{on } \bdry{\Omega},
\end{aligned}\right.
\]
where $\Omega$ is a bounded domain in $\R^N,\, N > p > q > 1$, $\mu > 0$, and $p^\ast = Np/(N - p)$ is the critical Sobolev exponent, has been studied by Li and Zhang \cite{MR2509998} in the case $1 < r < q$ and by Yin and Yang \cite{MR2890966} in the case $p < r < p^\ast$. In the present paper we consider the question of existence of nontrivial solutions in the borderline case
\begin{equation} \label{1}
\left\{\begin{aligned}
- \Delta_p\, u - \Delta_q\, u & = \mu\, |u|^{q-2}\, u + \lambda\, |u|^{p-2}\, u + |u|^{p^\ast - 2}\, u && \text{in } \Omega\\[10pt]
u & = 0 && \text{on } \bdry{\Omega}
\end{aligned}\right.
\end{equation}
with $\mu \in \R$ and $\lambda > 0$. In addition to the usual difficulty of the lack of compactness associated with problems involving critical exponents, this problem is further complicated by the absence of a direct sum decomposition suitable for applying the linking theorem when $\mu$ is above the second eigenvalue of the eigenvalue problem
\begin{equation} \label{7}
\left\{\begin{aligned}
- \Delta_q\, u & = \mu\, |u|^{q-2}\, u && \text{in } \Omega\\[10pt]
u & = 0 && \text{on } \bdry{\Omega}.
\end{aligned}\right.
\end{equation}
To overcome this difficulty, we will first prove an abstract critical point theorem based on a cohomological index that generalizes the classical linking theorem of Rabinowitz \cite{MR0488128}.

Weak solutions of problem \eqref{1} coincide with critical points of the $C^1$-functional
\begin{equation} \label{2}
\Phi(u) = \int_\Omega \left(\frac{1}{p}\, |\nabla u|^p + \frac{1}{q}\, |\nabla u|^q - \frac{\mu}{q}\, |u|^q - \frac{\lambda}{p}\, |u|^p - \frac{1}{p^\ast}\, |u|^{p^\ast}\right) dx, \quad u \in W^{1,p}_0(\Omega),
\end{equation}
where $W^{1,p}_0(\Omega)$ is the usual Sobolev space with the norm $\norm{u} = \norm[p]{\nabla u}$ and $\norm[p]{\cdot}$ denotes the norm in $L^p(\Omega)$. Recall that $\Phi$ satisfies the Palais-Smale compactness condition at the level $c \in \R$, or \PS{c} for short, if every sequence $\seq{u_j} \subset W^{1,p}_0(\Omega)$ such that $\Phi(u_j) \to c$ and $\Phi'(u_j) \to 0$, called a \PS{c} sequence, has a convergent subsequence. Let
\begin{equation} \label{3}
S = \inf_{u \in W^{1,p}_0(\Omega) \setminus \set{0}}\, \frac{\norm[p]{\nabla u}^p}{\norm[p^\ast]{u}^p} > 0
\end{equation}
be the best constant for the Sobolev imbedding $W^{1,p}_0(\Omega) \hookrightarrow L^{p^\ast}(\Omega)$. Our existence results will be based on the following proposition.

\begin{proposition} \label{Proposition 1}
If $c < S^{N/p}/N$ and $c \ne 0$, then every {\em \PS{c}} sequence has a subsequence that converges weakly to a nontrivial critical point of $\Phi$.
\end{proposition}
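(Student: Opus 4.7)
My plan follows the standard four-step scheme for critical variational problems: (i)~show that the Palais--Smale sequence $\seq{u_j}$ is bounded in $W^{1,p}_0(\Omega)$; (ii)~extract a weakly convergent subsequence $u_j \wto u$; (iii)~prove that the weak limit is a critical point of $\Phi$; and (iv)~rule out $u \equiv 0$ using the level constraint.

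For (i), I consider $\Phi(u_j) - \frac{1}{r}\, \ip{\Phi'(u_j)}{u_j}$ with any fixed $r \in (p, p^\ast)$. Direct expansion produces the nonnegative quantities $\bigl(\frac{1}{p} - \frac{1}{r}\bigr)\norm[p]{\nabla u_j}^p$, $\bigl(\frac{1}{q} - \frac{1}{r}\bigr)\norm[q]{\nabla u_j}^q$, and $\bigl(\frac{1}{r} - \frac{1}{p^\ast}\bigr)\norm[p^\ast]{u_j}^{p^\ast}$, while the right side is bounded by $O(1) + \eps\,\norm[p]{\nabla u_j}$ (the Palais--Smale error), plus the subcritical contributions $|\mu|\,\norm[q]{u_j}^q$ and $\lambda\,\norm[p]{u_j}^p$. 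H\"older on the bounded $\Omega$ and Young's inequality dominate each subcritical term by $\delta\,\norm[p^\ast]{u_j}^{p^\ast} + C_\delta$ for arbitrarily small $\delta > 0$, so these absorb into the critical quantity on the left; what remains is $\norm[p]{\nabla u_j}^p \leq C + \eps\,\norm[p]{\nabla u_j}$, giving boundedness for every $\mu \in \R$ and $\lambda > 0$. After (ii), Rellich--Kondrachov supplies a subsequence with $u_j \to u$ strongly in $L^p(\Omega)$ and $L^q(\Omega)$, and $u_j \to u$ a.e.\ in $\Omega$.

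Step (iii) is the delicate one; once the a.e.\ convergence $\nabla u_j \to \nabla u$ is established, Vitali's theorem lets me pass to the limit in $\ip{\Phi'(u_j)}{\phi}$ for every $\phi \in C_c^\infty(\Omega)$. I would prove the a.e.\ convergence by the Boccardo--Murat truncation trick: test $\Phi'(u_j) \to 0$ against $T_k(u_j - u)$ for each $k > 0$, where $T_k$ is the standard truncation. The reaction terms, including the critical $|u_j|^{p^\ast-2} u_j$, paired against $T_k(u_j - u)$ vanish in the limit because $T_k(u_j - u) \to 0$ strongly in every $L^s$ with $s < \infty$ (dominated convergence on the bounded $\Omega$); weak $L^p$-convergence of $\nabla u_j$ and absolute continuity of the $L^{p'}$-integral of $|\nabla u|^{p-2}\nabla u$ eliminate the linear cross terms. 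What survives is
\[
\int_\Omega \bigl[(A_p(\nabla u_j) - A_p(\nabla u)) + (A_q(\nabla u_j) - A_q(\nabla u))\bigr] \cdot \nabla(u_j - u)\,\chi_{\{|u_j - u| < k\}}\, dx \to 0,
\]
with $A_r(\xi) = |\xi|^{r-2}\xi$. Strict monotonicity of $A_r$ then forces $\nabla u_j \to \nabla u$ in measure on $\Omega$, hence a.e.\ along a further subsequence.

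For (iv), suppose toward contradiction that $u \equiv 0$, and put $a = \lim \norm[p]{\nabla u_j}^p$, $b = \lim \norm[q]{\nabla u_j}^q$, $c_\ast = \lim \norm[p^\ast]{u_j}^{p^\ast}$ along a further subsequence. Since $\norm[p]{u_j}, \norm[q]{u_j} \to 0$, the relations $\ip{\Phi'(u_j)}{u_j} \to 0$ and $\Phi(u_j) \to c$ give $a + b = c_\ast$ and $c = a/N + (1/q - 1/p^\ast)\, b$. If $c_\ast = 0$, then $a = b = 0$ and $c = 0$, contradicting $c \ne 0$. If $c_\ast > 0$, the Sobolev inequality gives $S\, c_\ast^{p/p^\ast} \leq a \leq c_\ast$, whence $c_\ast \geq S^{N/p}$ and $a \geq S^{N/p}$; since $1/q > 1/p^\ast$ and $b \geq 0$, this yields $c \geq a/N \geq S^{N/p}/N$, contradicting the hypothesis. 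The main obstacle is step~(iii): classical Boccardo--Murat requires the source term of the $(p,q)$-equation to converge strongly in $W^{-1,p'}$, but here $|u_j|^{p^\ast-2} u_j$ converges only weakly in $L^{(p^\ast)'}$; the truncation $T_k(u_j - u)$ is precisely what overcomes this, since its strong $L^{p^\ast}$-convergence to $0$ controls the pairing with the weakly convergent critical source.
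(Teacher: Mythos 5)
Your proof is correct and follows essentially the same route as the paper's: boundedness via $\Phi(u_j) - \tfrac{1}{r}\,\Phi'(u_j)u_j$ (the paper takes $r = p$, you take any $r \in (p,p^\ast)$), extraction of a weak limit which is a critical point, and a contradiction with $c < S^{N/p}/N$ if the weak limit is trivial, using \eqref{3} on the gradient/critical $L^{p^\ast}$ balance. Your step (iv) reformulates the paper's dichotomy $\norm{u_j}\to 0$ versus $\norm{u_j}^p \ge S^{N/p} + \o(1)$ in terms of the limits $a$, $b$, $c_\ast$; the algebra is equivalent.

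The one place you genuinely add content is step (iii). The paper disposes of it in a single clause, ``\,$u$ is a critical point of $\Phi$ by the weak continuity of $\Phi'$\,'', which is terse: $-\Delta_p - \Delta_q$ is not weak-to-weak continuous, and identifying the weak $L^{p'}$-limit of $|\nabla u_j|^{p-2}\nabla u_j$ with $|\nabla u|^{p-2}\nabla u$ does require an argument. Your Boccardo--Murat truncation is a clean and standard way to supply it, and you correctly identify why the truncation is indispensable here: $T_k(u_j - u) \to 0$ strongly in $L^{p^\ast}(\Omega)$, which neutralizes the pairing with the merely weakly convergent critical source $|u_j|^{p^\ast - 2}u_j$, and the cross terms $\int A_r(\nabla u)\cdot\nabla T_k(u_j - u)\,dx$ vanish because $\nabla T_k(u_j - u) \wto 0$ in $L^r$ with $A_r(\nabla u)\in L^{r'}$. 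Strict monotonicity then yields a.e.\ gradient convergence, hence $A_r(\nabla u_j) \wto A_r(\nabla u)$ in $L^{r'}$ for $r = p, q$, and $\Phi'(u) = 0$. This is precisely the detail the paper suppresses, and your write-up fills that gap; the rest matches the paper closely.
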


Let
\begin{equation} \label{6}
\mu_1 = \inf_{u \in W^{1,q}_0(\Omega) \setminus \set{0}}\, \frac{\norm[q]{\nabla u}^q}{\norm[q]{u}^q} > 0
\end{equation}
be the first eigenvalue of the eigenvalue problem \eqref{7}. First we seek a nonnegative nontrivial solution of problem \eqref{1} when $\mu \le \mu_1$. Let
\begin{equation} \label{8}
\lambda_1 = \inf_{u \in W^{1,p}_0(\Omega) \setminus \set{0}}\, \frac{\norm[p]{\nabla u}^p}{\norm[p]{u}^p} > 0
\end{equation}
be the first eigenvalue of the eigenvalue problem
\[
\left\{\begin{aligned}
- \Delta_p\, u & = \lambda\, |u|^{p-2}\, u && \text{in } \Omega\\[10pt]
u & = 0 && \text{on } \bdry{\Omega}.
\end{aligned}\right.
\]
Our first main result is the following theorem.

\begin{theorem} \label{Theorem 2}
Assume that $1 < q < p$ and $p^2 < N$. If $0 < \lambda < \lambda_1$ and $\mu \le \mu_1$, then problem \eqref{1} has a nonnegative nontrivial solution in each of the following cases:
\begin{enumroman}
\item \label{Theorem 4.i} $N(p - 1)/(N - p) \le q < (N - p)\, p/N$,
\item \label{Theorem 4.ii} $N(p - 1)/(N - 1) < q < \min \set{N(p - 1)/(N - p),(N - p)\, p/N}$,
\item \label{Theorem 4.iii} $(1 - 1/N)\, p^2 + p < N$ and $q = N(p - 1)/(N - 1)$,
\item \label{Theorem 4.iv} $(p - 1)\, p^2/(N - p) < q < N(p - 1)/(N - 1)$.
\end{enumroman}
\end{theorem}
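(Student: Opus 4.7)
The strategy is a mountain pass argument combined with Proposition~\ref{Proposition 1}. To produce a nonnegative solution, I work with the truncated functional
\[
\Phi^+(u) = \int_\Omega \biggl( \tfrac{1}{p} |\nabla u|^p + \tfrac{1}{q} |\nabla u|^q - \tfrac{\mu}{q} (u^+)^q - \tfrac{\lambda}{p} (u^+)^p - \tfrac{1}{p^\ast} (u^+)^{p^\ast} \biggr) dx,
\]
whose nontrivial critical points are automatically nonnegative (testing with $u^-$) and solve \eqref{1}. By \eqref{6} and $\mu \le \mu_1$, the $q$-part of $\Phi^+$ is nonnegative, while \eqref{8} and $\lambda < \lambda_1$ yield $\norm[p]{\nabla u}^p - \lambda \int_\Omega (u^+)^p \, dx \ge (1 - \lambda/\lambda_1)\norm{u}^p$. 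Combined with the Sobolev inequality this gives $\Phi^+(u) \ge \alpha > 0$ on some sphere $\norm{u} = \rho$, while $\Phi^+(t\varphi_0) \to -\infty$ as $t \to \infty$ for any fixed nonnegative $\varphi_0 \not\equiv 0$, so the mountain pass minimax level $c$ satisfies $c \ge \alpha > 0$.

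The decisive step is showing $c < S^{N/p}/N$, which I do by testing along the ray $t \mapsto t v_\eps$, where $v_\eps = \phi\, U_\eps$ is a smooth cut-off of the Aubin--Talenti extremal $U_\eps$ for $W^{1,p}_0(\Omega) \hookrightarrow L^{p^\ast}(\Omega)$ concentrated at an interior point of $\Omega$, normalized so that $\norm[p]{\nabla U_\eps}^p = \norm[p^\ast]{U_\eps}^{p^\ast} = S^{N/p}$. Standard cut-off estimates give $\norm[p]{\nabla v_\eps}^p = S^{N/p} + O(\eps^{(N-p)/(p-1)})$ and $\norm[p^\ast]{v_\eps}^{p^\ast} = S^{N/p} + O(\eps^{N/(p-1)})$. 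Expanding $\Phi^+(tv_\eps)$ about its maximizing $t_\eps \to 1$ yields
\[
\sup_{t \ge 0} \Phi^+(tv_\eps) = \tfrac{1}{N} S^{N/p} - \tfrac{\lambda}{p} \norm[p]{v_\eps}^p + \tfrac{1}{q}\bigl(\norm[q]{\nabla v_\eps}^q - \mu \norm[q]{v_\eps}^q\bigr) + O(\eps^{(N-p)/(p-1)}).
\]
Since $p^2 < N$, one has $\norm[p]{v_\eps}^p \sim \eps^p$ and the error term is $o(\eps^p)$, so the required inequality reduces to showing that the $q$-contribution is also $o(\eps^p)$. The asymptotic orders of $\norm[q]{\nabla v_\eps}^q$ and $\norm[q]{v_\eps}^q$ change according to how $q$ compares with the critical exponents $N(p-1)/(N-1)$ (governing integrability at infinity of $|\nabla U|^q$) and $N(p-1)/(N-p)$ (governing that of $U^q$), and the four ranges (i)--(iv) are designed so that in each case both these norms decay strictly faster than $\eps^p$, with a logarithmic correction handled in the borderline case (iii).

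Once $\alpha \le c < S^{N/p}/N$ is established, the mountain pass lemma produces a \PS{c} sequence $(u_j)$ for $\Phi^+$. Testing $(\Phi^+)'(u_j) \to 0$ against $u_j^-$ forces $u_j^- \to 0$ in $W^{1,p}_0(\Omega)$, so $(u_j)$ is asymptotically a \PS{c} sequence for $\Phi$; Proposition~\ref{Proposition 1} then extracts a subsequence converging weakly to a nontrivial critical point $u$ of $\Phi$, which is nonnegative (since $u_j^+$ converges weakly to $u$ and $u^- = 0$) and solves \eqref{1}. The principal obstacle is the case-by-case asymptotic analysis behind the threshold estimate: one must carefully balance the inner (peak) and outer (tail) contributions to each of $\norm[q]{v_\eps}^q$ and $\norm[q]{\nabla v_\eps}^q$, and the particular lower bounds on $q$ imposed in the four ranges---notably $q > p^2(p-1)/(N-p)$ in (iv) and $(1-1/N)p^2 + p < N$ in (iii)---are precisely what force both norms below $\eps^p$ as $\eps \to 0^+$.
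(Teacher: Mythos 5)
Your strategy is essentially the same as the paper's: pass to the truncated functional $\Phi^+$, establish the mountain pass geometry from $\lambda<\lambda_1$ and $\mu\le\mu_1$, test along the ray $t\mapsto tv_\eps$ with Aubin--Talenti concentrating functions, and split into the four regimes of $q$ according to the asymptotic orders of $\norm[q]{\nabla v_\eps}^q$ and $\norm[q]{v_\eps}^q$. The paper uses a slightly different parametrization of $u_\eps$ (with $\eps$ in place of your $\eps^{p/(p-1)}$), so its rates read $\norm[p]{v_\eps}^p \sim \eps^{p-1}$ and gradient error $\O(\eps^{(N-p)/p})$, but these match yours after the change of variable.

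Where you genuinely differ from the paper is the step that produces the threshold estimate. You claim an expansion of $\sup_{t\ge 0}\Phi^+(tv_\eps)$ around the maximizing $t_\eps\to 1$, stating that the supremum equals the pure Sobolev maximum $\tfrac{1}{N}S^{N/p}$ plus the evaluations at $t=1$ of the lower-order terms and an explicit error. This is plausible but is not actually proved in the proposal: you would have to (a) show $t_\eps$ stays bounded and converges to $1$ at a controlled rate, (b) handle the case where $\norm[q]{\nabla v_\eps}^q-\mu\norm[q]{v_\eps}^q$ could be of either sign, and (c) verify the error from moving $t$ off the pure maximizer is of the claimed order. The paper sidesteps all of this with a cleaner device (its Lemma 3): write $\lambda = 2\widetilde\lambda$, split the ray estimate as
\[
\max_{t\ge 0}\Phi^+(tv_\eps)\le \max_{t\ge 0}\left[\frac{t^p}{p}\int\big(|\nabla v_\eps|^p-\widetilde\lambda v_\eps^p\big)\,dx-\frac{t^{p^\ast}}{p^\ast}\right]+\max_{t\ge 0}\left[\frac{t^q}{q}\int\big(|\nabla v_\eps|^q-\mu v_\eps^q\big)\,dx-\frac{\widetilde\lambda t^p}{p}\int v_\eps^p\,dx\right],
\]
and each bracket is maximized in closed form (the second via Young's inequality), giving an explicit algebraic upper bound for $c$ with no need to track $t_\eps$. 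Your approach is sharper in constants but requires the omitted rigor; the paper's is coarser but completely elementary. Both lead to the same conclusion once combined with the $\eps$-asymptotics of the test functions, and the same four $q$-regimes emerge.

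One small inaccuracy: having obtained a \PS{c} sequence for $\Phi^+$, you argue that $u_j^-\to 0$ makes $(u_j)$ "asymptotically" a \PS{c} sequence for $\Phi$. That inference is not immediate (the convergence $u_j^-\to 0$ does not directly give $\Phi(u_j)\to c$ and $\Phi'(u_j)\to 0$). The cleaner route, and the one the paper takes, is to repeat the proof of Proposition~\ref{Proposition 1} verbatim for the functional $\Phi^+$ itself: every \PS{c} sequence for $\Phi^+$ with $0<c<S^{N/p}/N$ has a subsequence weakly converging to a nontrivial critical point $u$ of $\Phi^+$, and then the identity ${\Phi^+}'(u)u^- = \int_\Omega(|\nabla u^-|^p+|\nabla u^-|^q)\,dx=0$ gives $u^-=0$, so $u\ge 0$ solves \eqref{1}.
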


Now we assume that $p < q^\ast$, where $q^\ast = Nq/(N - q)$ is the critical exponent for the imbedding $W^{1,q}_0(\Omega) \hookrightarrow L^p(\Omega)$. Then we have the following theorem.

\begin{theorem} \label{Theorem 5}
Assume that $1 < q < p < \min \set{N,q^\ast}$. If $\mu < \mu_1$, then there exists $\lambda^\ast(\mu) > 0$ such that problem \eqref{1} has a nonnegative nontrivial solution for all $\lambda \ge \lambda^\ast(\mu)$.
\end{theorem}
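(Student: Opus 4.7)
The plan is to find the desired solution as a mountain pass critical point of the truncated functional
\[
\tilde\Phi_\lambda(u) = \int_\Omega \left(\frac{1}{p}\abs{\nabla u}^p + \frac{1}{q}\abs{\nabla u}^q - \frac{\mu}{q}(u^+)^q - \frac{\lambda}{p}(u^+)^p - \frac{1}{p^\ast}(u^+)^{p^\ast}\right) dx,
\]
whose nontrivial critical points are automatically nonnegative (testing with $u^-$ gives $\int \abs{\nabla u^-}^p + \int \abs{\nabla u^-}^q = 0$). The proof of Proposition~\ref{Proposition 1} carries over to $\tilde\Phi_\lambda$ with only cosmetic changes, so it suffices to produce a Palais--Smale sequence at some level $c_\lambda \in (0, S^{N/p}/N)$.

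For the mountain pass geometry near $0$, I would exploit the hypothesis $p < q^\ast$, which provides the continuous imbedding $W^{1,q}_0(\Omega) \hookrightarrow L^p(\Omega)$, i.e.\ $\norm[p]{u} \leq C_1 \norm[q]{\nabla u}$. Combined with the bound $\int(\abs{\nabla u}^q - \mu(u^+)^q)\, dx \geq \kappa \norm[q]{\nabla u}^q$ (valid since $\mu < \mu_1$, with $\kappa = 1 - \mu/\mu_1 > 0$) and $\norm[p^\ast]{u} \leq S^{-1/p}\norm[p]{\nabla u}$, this yields
\[
\tilde\Phi_\lambda(u) \geq \frac{1}{p}\norm[p]{\nabla u}^p\left(1 - \frac{p\, S^{-p^\ast/p}}{p^\ast}\norm[p]{\nabla u}^{p^\ast-p}\right) + \norm[q]{\nabla u}^q\left(\frac{\kappa}{q} - \frac{\lambda C_1^p}{p}\norm[q]{\nabla u}^{p-q}\right).
\]
Choosing $\rho_\lambda = c\, \lambda^{-1/(p-q)}$ with $c>0$ small and using $\norm[q]{\nabla u} \leq \abs{\Omega}^{(p-q)/(pq)}\norm[p]{\nabla u}$, both brackets stay bounded below by fixed positive constants on the sphere $\norm[p]{\nabla u} = \rho_\lambda$, yielding $\tilde\Phi_\lambda(u) \geq \rho_\lambda^p/(2p) > 0$ there. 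The far end of the mountain is standard: $\tilde\Phi_\lambda(t\phi) \to -\infty$ as $t \to \infty$ for any fixed nonnegative $\phi \in C_c^\infty(\Omega) \setminus \set{0}$.

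To push the minimax level $c_\lambda$ below $S^{N/p}/N$ for large $\lambda$, I would use the straight-line path through such a $\phi$ and estimate $\max_{t \geq 0}\tilde\Phi_\lambda(t\phi)$. With $\alpha(\lambda) = (\lambda\norm[p]{\phi}^p - \norm[p]{\nabla\phi}^p)/p$, $\beta = (\norm[q]{\nabla\phi}^q - \mu\norm[q]{\phi}^q)/q > 0$, and $\gamma = \norm[p^\ast]{\phi}^{p^\ast}/p^\ast > 0$, once $\alpha(\lambda) > 0$ one has $\tilde\Phi_\lambda(t\phi) = -\alpha(\lambda)\,t^p + \beta\,t^q - \gamma\,t^{p^\ast}$, whose unique positive maximizer $t_\lambda$ satisfies $p\alpha(\lambda)\,t_\lambda^{p-q} \leq q\beta$, hence
\[
\max_{t \geq 0} \tilde\Phi_\lambda(t\phi) \leq \beta\, t_\lambda^q \leq \beta\left(\frac{q\beta}{p\alpha(\lambda)}\right)^{q/(p-q)} = O\bigl(\lambda^{-q/(p-q)}\bigr) \to 0.
\]
Thus $0 < c_\lambda < S^{N/p}/N$ for all $\lambda \geq \lambda^\ast(\mu)$ sufficiently large, and Proposition~\ref{Proposition 1} delivers the desired nonnegative nontrivial solution of \eqref{1}.

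The main obstacle is the sphere lower bound: for large $\lambda$ the term $-\lambda(u^+)^p/p$ dominates near the origin on any fixed $W^{1,p}_0$-sphere, forcing the radius to shrink as $\rho_\lambda \sim \lambda^{-1/(p-q)}$, and the resulting loss must be absorbed by the coercive $q$-part through the subcritical imbedding $W^{1,q}_0 \hookrightarrow L^p$---exactly what the hypothesis $p < q^\ast$ furnishes. Everything else (MP geometry, upper bound via the ray through $\phi$, extraction of a critical point via Proposition~\ref{Proposition 1}) is then relatively routine.
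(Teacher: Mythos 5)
Your proof is correct and follows essentially the same route as the paper: same truncated functional $\Phi^+$, same use of the imbedding $W^{1,q}_0(\Omega)\hookrightarrow L^p(\Omega)$ (from $p<q^\ast$) and $\mu<\mu_1$ to get the mountain pass geometry at the origin, the same straight ray through a fixed nonnegative test function as competitor, and the same $O(\lambda^{-q/(p-q)})$ decay of $\max_{t\ge 0}\Phi^+(t\phi)$ to push the minimax level below $S^{N/p}/N$ (the paper packages this last estimate as its Lemma~3\ref{Lemma 3.ii} via a $\lambda/2$ split rather than dropping the $t^{p^\ast}$ term, but the scaling and conclusion are identical).
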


Let $u^\pm(x) = \max \set{\pm u(x),0}$ be the positive and negative parts of $u$, respectively, and set
\[
\Phi^+(u) = \int_\Omega \left(\frac{1}{p}\, |\nabla u|^p + \frac{1}{q}\, |\nabla u|^q - \frac{\mu}{q}\, (u^+)^q - \frac{\lambda}{p}\, (u^+)^p - \frac{1}{p^\ast}\, (u^+)^{p^\ast}\right) dx, \quad u \in W^{1,p}_0(\Omega).
\]
If $u$ is a critical point of $\Phi^+$, then
\[
{\Phi^+}'(u)\, u^- = \int_\Omega \left(|\nabla u^-|^p + |\nabla u^-|^q\right) dx = 0
\]
and hence $u^- = 0$, so $u = u^+$ is a critical point of $\Phi$ and therefore a nonnegative solution of problem \eqref{1}. If, in addition, $\mu \ge 0$, then $u > 0$ in $\Omega$ by Cianchi \cite[Theorem 2]{MR1469584}, Lieberman \cite[Theorem 1.7]{MR1104103}, and Pucci and Serrin \cite[Theorem 1.1.1]{MR2356201}. Proofs of Theorems \ref{Theorem 2} and \ref{Theorem 5} will be based on constructing minimax levels of mountain pass type for $\Phi^+$ below the threshold level given in Proposition \ref{Proposition 1}.

Next we seek a (possibly nodal) nontrivial solution of problem \eqref{1} when $\mu \ge \mu_1$. We have the following theorem.

\begin{theorem} \label{Theorem 6}
Assume that $1 < q < p < \min \set{N,q^\ast}$. If $\mu \ge \mu_1$, then there exists $\lambda^\ast(\mu) > 0$ such that problem \eqref{1} has a nontrivial solution for all $\lambda \ge \lambda^\ast(\mu)$.
\end{theorem}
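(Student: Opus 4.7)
The plan is to apply the abstract cohomological-index linking theorem established earlier in the paper to the functional $\Phi$, then invoke Proposition \ref{Proposition 1} at the resulting minimax value. Since $\mu \ge \mu_1$, the origin is no longer a local minimum of $\Phi$ on any natural subspace, so the mountain-pass geometry used for Theorems \ref{Theorem 2} and \ref{Theorem 5} must be replaced by a linking geometry built from the Fadell--Rabinowitz cohomological index $i$ of the $q$-Laplacian eigenvalue problem \eqref{7}.

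First I would introduce the sequence of variational eigenvalues $\mu_1 \le \mu_2 \le \cdots \to \infty$ of $-\Delta_q$ defined via $i$, and pick $k \ge 1$ with $\mu_k \le \mu < \mu_{k+1}$. The sublevel cone $M^- = \set{u \in W^{1,q}_0(\Omega) \setminus \set{0} : \norm[q]{\nabla u}^q \le \mu_k \norm[q]{u}^q}$ and the superlevel cone $M^+ = \set{u \in W^{1,q}_0(\Omega) \setminus \set{0} : \norm[q]{\nabla u}^q \ge \mu_{k+1} \norm[q]{u}^q}$ satisfy $i(M^-) = i((W^{1,q}_0(\Omega) \setminus \set{0}) \setminus M^+) = k$ by standard properties of $i$; moreover, the hypothesis $p < q^\ast$ gives a continuous embedding $W^{1,q}_0(\Omega) \hookrightarrow L^p(\Omega)$, so both sets transfer into the ambient space $W^{1,p}_0(\Omega)$ of $\Phi$. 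Fix a suitable test function $\psi$ localized in $\Omega$ (for instance a truncated Talenti instanton $U_\epsilon$, possibly rescaled as a function of $\lambda$) and form the linking pair
\[
A = \set{v + s\, \psi : v \in M^-,\ s \ge 0,\ \norm{v + s\, \psi} \le R}, \qquad B = M^+ \cap \set{u : \norm{u} = r},
\]
with $R$ large and $r$ small. On $B$, the inequalities $\mu < \mu_{k+1}$ and $p < q^\ast$, combined with Sobolev-type estimates absorbing the subcritical $L^p$ and critical $L^{p^\ast}$ contributions into the $q$-gradient term, yield $\inf_B \Phi \ge \rho > 0$; on $\bdry A$, the constraint on $M^-$ neutralizes the $q$-contribution while the critical term $-|u|^{p^\ast}/p^\ast$ dominates for $R$ large, giving $\Phi \le 0$.

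The abstract linking theorem thereby produces a Palais--Smale sequence for $\Phi$ at a level $c \ge \rho > 0$, and the main obstacle, which is expected to be the heart of the argument, is to show that $c < S^{N/p}/N$ for all sufficiently large $\lambda$, so that Proposition \ref{Proposition 1} applies. The strategy is a Brezis--Nirenberg-type computation adapted to the $(p,q)$-setting, in which the decisive negative contribution $-(\lambda/p) \int_\Omega \psi^p\, dx$ must overcome the positive Talenti error of order $\O\bigl(\epsilon^{(N-p)/(p-1)}\bigr)$ on the $\psi$-slice, after which $\lambda$ is chosen large enough (depending on the optimal $\epsilon = \epsilon(\lambda)$ and on $\mu$) to define $\lambda^\ast(\mu)$. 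The truly delicate point is to control the \emph{mixed} energy $\Phi(v + s\, \psi)$ uniformly for $v \in M^- \cap \closure{B_R}$: one exploits the concentration of $\psi$ to decouple its interaction with $v$, and the sublevel constraint $\norm[q]{\nabla v}^q \le \mu_k \norm[q]{v}^q \le \mu \norm[q]{v}^q$ to absorb the $q$-contribution of $v$, with the remaining $-(\lambda/p)\norm[p]{v}^p$ term keeping the $p$-part in check via $p < q^\ast$. Once this combined estimate is established for $\lambda \ge \lambda^\ast(\mu)$, Proposition \ref{Proposition 1} delivers a subsequence converging weakly to a nontrivial critical point of $\Phi$, that is, the required nontrivial solution of \eqref{1}.
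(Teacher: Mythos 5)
Your overall strategy — reduce to the abstract index-linking theorem, pick $k$ with $\mu_k\le\mu<\mu_{k+1}$, link a sublevel cone of $-\Delta_q$ against a superlevel cone, then push the minimax level below $S^{N/p}/N$ and invoke Proposition~\ref{Proposition 1} — matches the paper's. However, there are two substantive gaps where the paper does something essential that the proposal glosses over or replaces by a much harder and unresolved argument.

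\textbf{First gap: the cones $M^\pm$ live in the wrong space, and your $A_0$ is not compact.} The sublevel cone $M^-=\set{u\ne 0: \norm[q]{\nabla u}^q\le\mu_k\norm[q]{u}^q}$ sits naturally in $W^{1,q}_0(\Omega)$, not in $W^{1,p}_0(\Omega)$ where $\Phi$ is defined; you say the sets ``transfer into the ambient space'' via $p<q^\ast$, but that embedding goes in the wrong direction ($W^{1,q}_0\hookrightarrow L^p$, not $W^{1,q}_0\hookrightarrow W^{1,p}_0$). The paper handles each side differently: for $B_0$, it uses Palais's theorem that intersecting with the dense subspace $W^{1,p}_0(\Omega)\subset W^{1,q}_0(\Omega)$ is a homotopy equivalence, preserving the index; for $A_0$, it invokes Degiovanni--Lancelotti \cite{MR2514055} to replace the (non-compact, $W^{1,q}_0$-level) sublevel set $\Psi^{\mu_k}$ by a symmetric cone $C$ with $C\cap M$ compact in $C^1(\overline\Omega)$ (hence in $W^{1,p}_0(\Omega)$) and $i(C\setminus\set0)=k$. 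Without this step your $A_0$ is neither in $W^{1,p}_0$ nor compact, and the hypothesis $\sup\Phi(X)<\infty$ in Theorem~\ref{Theorem 8} is already in doubt.

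\textbf{Second gap: the level estimate is not a Brezis--Nirenberg bubble computation here, and your version of it is left unresolved.} Precisely because $A_0$ is made compact, the paper's level estimate is elementary: on the compact set $X_0=\set{\pi_p((1-t)u+tv):u\in A_0,\,0\le t\le1}$ one has $\alpha=\inf_{X_0}\int|u|^p\,dx>0$ and $\beta=\sup_{X_0}\int(|\nabla u|^q-\mu|u|^q)\,dx<\infty$, whence $\Phi(tu)\le\big(\tfrac1q-\tfrac1p\big)(\beta^+)^{p/(p-q)}/(\widetilde\lambda\alpha)^{q/(p-q)}$ for all $t\ge0$, and this is $<S^{N/p}/N$ for $\lambda$ large. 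No instanton appears; the largeness of $\lambda$ alone drives the level below threshold, exactly as in Theorem~\ref{Theorem 5}. Your proposal instead grafts a concentration profile $\psi$ onto $M^-$ and tries to control the mixed energy $\Phi(v+s\psi)$ uniformly over the (non-compact) slab $v\in M^-\cap\closure{B_R}$; you acknowledge this as ``the truly delicate point'' but do not resolve it, and with a non-compact $A_0$ the required uniform estimate genuinely fails to follow from the tools stated. If you adopt the Degiovanni--Lancelotti compact cone, the bubble and the mixed-energy decoupling become unnecessary.
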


This extension of Theorem \ref{Theorem 5} is nontrivial. Indeed, the functional $\Phi$ does not have the mountain pass geometry when $\mu \ge \mu_1$ since the origin is no longer a local minimizer, and a linking type argument is needed. However, the classical linking theorem cannot be used since the nonlinear operator $- \Delta_q$ does not have linear eigenspaces. We will use a more general construction based on sublevel sets as in Perera and Szulkin \cite{MR2153141} (see also Perera et al.\! \cite[Proposition 3.23]{MR2640827}). Moreover, the standard sequence of eigenvalues of $- \Delta_q$ based on the genus does not give enough information about the structure of the sublevel sets to carry out this linking construction. Therefore we will use a different sequence of eigenvalues introduced in Perera \cite{MR1998432} that is based on a cohomological index.

The $\Z_2$-cohomological index of Fadell and Rabinowitz \cite{MR57:17677} is defined as follows. Let $W$ be a Banach space and let $\A$ denote the class of symmetric subsets of $W \setminus \set{0}$. For $A \in \A$, let $\overline{A} = A/\Z_2$ be the quotient space of $A$ with each $u$ and $-u$ identified, let $f : \overline{A} \to \RP^\infty$ be the classifying map of $\overline{A}$, and let $f^\ast : H^\ast(\RP^\infty) \to H^\ast(\overline{A})$ be the induced homomorphism of the Alexander-Spanier cohomology rings. The cohomological index of $A$ is defined by
\[
i(A) = \begin{cases}
\sup \set{m \ge 1 : f^\ast(\omega^{m-1}) \ne 0}, & A \ne \emptyset\\[5pt]
0, & A = \emptyset,
\end{cases}
\]
where $\omega \in H^1(\RP^\infty)$ is the generator of the polynomial ring $H^\ast(\RP^\infty) = \Z_2[\omega]$. For example, the classifying map of the unit sphere $S^{m-1}$ in $\R^m,\, m \ge 1$ is the inclusion $\RP^{m-1} \incl \RP^\infty$, which induces isomorphisms on $H^q$ for $q \le m - 1$, so $i(S^{m-1}) = m$. The following proposition summarizes the basic properties of this index.

\begin{proposition}[Fadell-Rabinowitz \cite{MR57:17677}] \label{Proposition 7}
The index $i : \A \to \N \cup \set{0,\infty}$ has the following properties:
\begin{properties}{i}
\item Definiteness: $i(A) = 0$ if and only if $A = \emptyset$;
\item \label{i2} Monotonicity: If there is an odd continuous map from $A$ to $B$ (in particular, if $A \subset B$), then $i(A) \le i(B)$. Thus, equality holds when the map is an odd homeomorphism;
\item Dimension: $i(A) \le \dim W$;
\item Continuity: If $A$ is closed, then there is a closed neighborhood $N \in \A$ of $A$ such that $i(N) = i(A)$. When $A$ is compact, $N$ may be chosen to be a $\delta$-neighborhood $N_\delta(A) = \set{u \in W : \dist{u}{A} \le \delta}$;
\item Subadditivity: If $A$ and $B$ are closed, then $i(A \cup B) \le i(A) + i(B)$;
\item \label{i6} Stability: If $SA$ is the suspension of $A \ne \emptyset$, obtained as the quotient space of $A \times [-1,1]$ with $A \times \set{1}$ and $A \times \set{-1}$ collapsed to different points, then $i(SA) = i(A) + 1$;
\item \label{i7} Piercing property: If $A$, $A_0$ and $A_1$ are closed, and $\varphi : A \times [0,1] \to A_0 \cup A_1$ is a continuous map such that $\varphi(-u,t) = - \varphi(u,t)$ for all $(u,t) \in A \times [0,1]$, $\varphi(A \times [0,1])$ is closed, $\varphi(A \times \set{0}) \subset A_0$ and $\varphi(A \times \set{1}) \subset A_1$, then $i(\varphi(A \times [0,1]) \cap A_0 \cap A_1) \ge i(A)$;
\item Neighborhood of zero: If $U$ is a bounded closed symmetric neighborhood of $0$, then $i(\bdry{U}) = \dim W$.
\end{properties}
\end{proposition}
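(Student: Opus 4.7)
The overall plan is to derive each property from naturality of the classifying map $f : \overline{A} \to \RP^\infty$, the ring structure $H^*(\RP^\infty) = \Z_2[\omega]$, and the tautness of Alexander-Spanier cohomology on closed symmetric sets. Definiteness $(i_1)$ is immediate from the convention together with $f^*(\omega^0) = 1 \ne 0$ whenever $\overline{A} \ne \emptyset$. For monotonicity $(i_2)$, an odd continuous map $g : A \to B$ descends to $\overline{g} : \overline{A} \to \overline{B}$, and uniqueness of classifying maps up to homotopy yields $f_A \simeq f_B \circ \overline{g}$, hence $f_A^* = \overline{g}^* \circ f_B^*$, so nontriviality of $f_A^*(\omega^{m-1})$ forces nontriviality of $f_B^*(\omega^{m-1})$. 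For the dimension bound $(i_3)$, when $\dim W < \infty$ the quotient $\overline{A}$ embeds in the projective space of $W$, through which the classifying map factors, and $H^k(\RP^{\dim W - 1}) = 0$ for $k \ge \dim W$. Property $(i_8)$ reduces to the sphere computation already recalled in the excerpt: the radial map $\bdry{U} \to S^{\dim W - 1}$ is an odd homeomorphism, so $(i_2)$ gives $i(\bdry{U}) = \dim W$.

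Continuity $(i_4)$ is the tautness property of Alexander-Spanier cohomology: $H^*(\overline{A})$ is the direct limit of $H^*(\overline{N})$ over closed symmetric neighborhoods $N \in \A$ of $A$, so if $f_A^*(\omega^{i(A)-1}) \ne 0$, the same is true for some such $N$, and monotonicity pins down $i(N) = i(A)$. Subadditivity $(i_5)$ is the standard cup-product argument: writing $m = i(A)$ and $n = i(B)$ and assuming both finite, $\omega^m$ restricts to zero on $\overline{A}$, so the long exact sequence of the pair $(\overline{A \cup B},\overline{A})$ produces a lift $\tilde\omega_A \in H^m(\overline{A \cup B},\overline{A})$; symmetrically one obtains $\tilde\omega_B \in H^n(\overline{A \cup B},\overline{B})$. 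Their cup product lies in $H^{m+n}(\overline{A \cup B},\overline{A} \cup \overline{B}) = 0$ yet maps to $\omega^{m+n}$ in the absolute cohomology, forcing $i(A \cup B) \le m + n$. Stability $(i_6)$ follows from a similar calculation on the description of $\overline{SA}$ as two cones on $\overline{A}$ glued along the base: Mayer-Vietoris applied to this decomposition shows that cup product with $\omega$ shifts the top nonvanishing power up by exactly one, giving $i(SA) = i(A) + 1$.

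The main obstacle is the piercing property $(i_7)$, whose proof is the deepest of the eight and uses both the interval structure of the domain of $\varphi$ and the oddness constraint in an essential way. Setting $C = \varphi(A \times [0,1])$ and $C_j = C \cap A_j$, I would argue by contradiction. Assuming $i(C_0 \cap C_1) < m := i(A)$, continuity yields a closed symmetric neighborhood $N$ of $C_0 \cap C_1$ in $W$ with $i(N) < m$. One then uses an odd Urysohn-type separation of the disjoint closed symmetric sets $C_0 \setminus \interior{N}$ and $C_1 \setminus \interior{N}$ together with a carefully chosen first-crossing time along the paths $t \mapsto \varphi(u,t)$ to build an odd continuous map $A \to N$; by monotonicity this gives the contradiction $m = i(A) \le i(N) < m$. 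The technical heart, and what I expect to be the real difficulty, is arranging the crossing time to depend continuously and oddly on $u \in A$ while accommodating paths that already enter $N$ at an endpoint, which is precisely the content of the piercing statement and the step where the full strength of the cohomological set-up is needed.
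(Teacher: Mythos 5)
The paper itself offers no proof of Proposition~\ref{Proposition 7}: it is stated as a standard result and attributed to Fadell and Rabinowitz \cite{MR57:17677} (with \cite{MR2640827} as a secondary reference), so there is no in-paper argument against which to compare your sketch. Taken on its own terms, your outlines for $(i_1)$--$(i_5)$ are essentially the standard ones from the literature and look sound.

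Two points need repair. For $(i_8)$ you assert that radial projection $\bdry{U} \to S$ is an odd homeomorphism; it is odd and continuous, but a bounded closed symmetric neighborhood of $0$ need not be star-shaped, so a ray can meet $\bdry{U}$ more than once and the map is generally not injective. Monotonicity via this map only gives $i(\bdry{U}) \le \dim W$. The reverse inequality is usually derived from the piercing property itself: take $\varphi : S \times [0,1] \to W$, $\varphi(u,t) = (r + t(R-r))u$ with $r$ small and $R$ large so that $\varphi(S \times \set{0}) \subset U$ and $\varphi(S \times \set{1}) \subset W \setminus \interior{U}$, set $A_0 = U$, $A_1 = \closure{W \setminus U}$ (so $A_0 \cap A_1 = \bdry{U}$), and apply $(i_7)$ and $(i_2)$ to get $i(\bdry{U}) \ge i(S) = \dim W$. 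For $(i_7)$, your proposed ``first-crossing-time'' construction is not what is needed and, as you candidly note, you have not made it work: the crossing time will generally fail to depend continuously on $u$, and the oddness bookkeeping near the endpoints is exactly where such constructions break. The Fadell--Rabinowitz proof is a cohomological one, built on relative classes and cup products along the lines of your subadditivity argument (a Mayer--Vietoris / relative cup-product computation for the triple $(\varphi(A\times[0,1]), C_0, C_1)$), not a pointwise selection of crossing parameters. As it stands, your treatment of the two genuinely hard items $(i_7)$ and $(i_8)$ has real gaps.
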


The Dirichlet spectrum of $- \Delta_q$ in $\Omega$ consists of those $\mu \in \R$ for which problem \eqref{7} has a nontrivial solution. Although a complete description of the spectrum is not yet known when $N \ge 2$, we can define an increasing and unbounded sequence of eigenvalues via a suitable minimax scheme. The standard scheme based on the genus does not give the index information necessary to prove Theorem \ref{Theorem 6}, so we will use the following scheme based on the cohomological index as in Perera \cite{MR1998432}. Let
\[
\Psi(u) = \frac{1}{\dint_\Omega |u|^q\, dx}, \quad u \in S_q = \set{u \in W^{1,q}_0(\Omega) : \int_\Omega |\nabla u|^q\, dx = 1}.
\]
Then eigenvalues of problem \eqref{7} on $S_q$ coincide with critical values of $\Psi$. We use the standard notation
\[
\Psi^a = \set{u \in S_q : \Psi(u) \le a}, \quad \Psi_a = \set{u \in S_q : \Psi(u) \ge a}, \quad a \in \R
\]
for the sublevel sets and superlevel sets, respectively. Let $\F$ denote the class of symmetric subsets of $S_q$ and set
\[
\mu_k := \inf_{M \in \F,\; i(M) \ge k}\, \sup_{u \in M}\, \Psi(u), \quad k \in \N.
\]
Then $0 < \mu_1 < \mu_2 \le \mu_3 \le \cdots \to + \infty$ is a sequence of eigenvalues of problem \eqref{7} and
\begin{equation} \label{15}
\mu_k < \mu_{k+1} \implies i(\Psi^{\mu_k}) = i(S_q \setminus \Psi_{\mu_{k+1}}) = k
\end{equation}
(see Perera et al.\! \cite[Propositions 3.52 and 3.53]{MR2640827}).

Proof of Theorem \ref{Theorem 6} will make essential use of \eqref{15} and will be based on the following abstract critical point theorem, which is of independent interest. Let $W$ be a Banach space, let
\[
S = \set{u \in W : \norm{u} = 1}
\]
be the unit sphere in $W$, and let
\[
\pi : W \setminus \set{0} \to S, \quad u \mapsto \frac{u}{\norm{u}}
\]
be the radial projection onto $S$.

\begin{theorem} \label{Theorem 8}
Let $\Phi$ be a $C^1$-functional on $W$ and let $A_0,\, B_0$ be disjoint nonempty closed symmetric subsets of $S$ such that
\begin{equation} \label{16}
i(A_0) = i(S \setminus B_0) < \infty.
\end{equation}
Assume that there exist $R > r > 0$ and $v \in S \setminus A_0$ such that
\[
\sup \Phi(A) \le \inf \Phi(B), \qquad \sup \Phi(X) < \infty,
\]
where
\begin{gather*}
A = \set{tu : u \in A_0,\, 0 \le t \le R} \cup \set{R\, \pi((1 - t)\, u + tv) : u \in A_0,\, 0 \le t \le 1},\\[5pt]
B = \set{ru : u \in B_0},\\[5pt]
X = \set{tu : u \in A,\, \norm{u} = R,\, 0 \le t \le 1}.
\end{gather*}
Let $\Gamma = \set{\gamma \in C(X,W) : \gamma(X) \text{ is closed and} \restr{\gamma}{A} = \id{A}}$ and set
\[
c := \inf_{\gamma \in \Gamma}\, \sup_{u \in \gamma(X)}\, \Phi(u).
\]
Then
\[
\inf \Phi(B) \le c \le \sup \Phi(X)
\]
and $\Phi$ has a {\em \PS{c}} sequence.
\end{theorem}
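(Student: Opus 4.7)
The upper bound $c \le \sup \Phi(X)$ is immediate since $\gamma = \id{X}$ belongs to $\Gamma$ ($X$ being closed and $\restr{\id{X}}{A} = \id{A}$), so the two remaining assertions are what require work.

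For the lower bound $c \ge \inf\Phi(B)$, I would reduce to the topological claim that $\gamma(X) \cap B \neq \emptyset$ for every $\gamma \in \Gamma$; from this, $\sup\Phi(\gamma(X)) \ge \inf\Phi(B)$ and the infimum yields the bound. Suppose, for contradiction, that some $\gamma \in \Gamma$ has $\gamma(X) \cap B = \emptyset$. Denote by $A_2 = \set{R\,\pi((1-t)u + tv) : u \in A_0,\, 0 \le t \le 1}$ the cap of $A$, and set $M = RA_0 \cup A_2 \cup (-A_2)$, a symmetric subset of the sphere of radius $R$. Its radial projection $\pi(M) \subset S$ consists of $A_0$ together with two antipodally arranged ``hemispheres'' joined at the apices $v$ and $-v$, so $\pi(M)$ is $\Z_2$-equivariantly homeomorphic to the symmetric suspension of $A_0$ and by the stability property $(i_6)$ of Proposition \ref{Proposition 7},
\[
i(\pi(M)) = i(A_0) + 1.
\]
Extend $\gamma$ to an odd continuous map $\tilde\gamma : X \cup (-X) \to W$ via $\tilde\gamma(u) := -\gamma(-u)$ on $-X$; consistency on the overlap $X \cap (-X)$ is built in because $\gamma = \id{}$ on the symmetric cone $A_1 = \set{tu : u \in A_0,\, 0 \le t \le R}$ (any extra overlap between the cap and its reflection can be removed by a generic choice of $v$ or handled by approximation). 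Define
\[
\varphi : \pi(M) \times [0, 1] \to W, \qquad \varphi(u, t) = \tilde\gamma(tRu),
\]
which is continuous, odd in $u$, and satisfies $\varphi(\pi(M) \times \set{0}) = \set{0}$ and $\varphi(\pi(M) \times \set{1}) = M$. Applying the piercing property $(i_7)$ with the closed sets $A^{\mathrm{in}} = \set{w \in W : \|w\| \le r}$ and $A^{\mathrm{out}} = \set{w \in W : \|w\| \ge r}$ yields
\[
i\bigl(\varphi(\pi(M) \times [0,1]) \cap rS\bigr) \ge i(\pi(M)) = i(A_0) + 1.
\]
On the other hand, the contradiction hypothesis forces each point of this intersection into $r(S \setminus B_0)$, and via the odd homeomorphism $w \mapsto w/r$ and monotonicity $(i_2)$ the left-hand side is bounded above by $i(S \setminus B_0) = i(A_0)$, a contradiction.

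For the \PS{c} sequence, I would run a standard quantitative deformation argument: if none existed, there would be $\eps > 0$ and a continuous $\eta : W \to W$ fixing $\Phi^{c - 2\eps}$ pointwise and sending $\Phi^{c+\eps}$ into $\Phi^{c-\eps}$. Since $\sup\Phi(A) \le \inf\Phi(B) \le c$, taking $\eps$ small enough ensures $\eta|_A = \id{A}$ (the borderline case $\sup\Phi(A) = c$ requiring a mild refinement using that the gradient is only pushed where it is large); then for any $\gamma_0 \in \Gamma$ with $\sup\Phi(\gamma_0(X)) < c + \eps$ one has $\eta \circ \gamma_0 \in \Gamma$ and $\sup\Phi((\eta\circ\gamma_0)(X)) \le c - \eps$, contradicting the definition of $c$.

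The principal obstacle lies in the linking step, most delicately in setting up the odd extension $\tilde\gamma$ on $X \cup (-X)$ (there is no \emph{a priori} reason for the cap and its reflection to overlap only in $A_0$) and in verifying that $\varphi(\pi(M) \times [0, 1])$ is closed as required by $(i_7)$. The geometric upshot is that the cap $A_2$, connecting $A_0$ to the fixed direction $v \in S \setminus A_0$, supplies exactly the extra suspension coordinate that raises the cohomological index from $i(A_0)$ to $i(A_0) + 1$, and it is this single extra unit of index that collides with the equality $i(A_0) = i(S \setminus B_0)$ to produce the contradiction.
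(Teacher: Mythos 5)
Your plan is essentially the paper's: reduce the lower bound $c \ge \inf\Phi(B)$ to the homotopical linking claim $\gamma(X)\cap B \neq \emptyset$, prove that claim by contradiction via a suspension of $A_0$ together with the piercing property $(i_7)$, and then obtain the \PS{c} sequence by a standard quantitative deformation. Two points need repair, both of which you partly flag yourself. First, the assertion that $\pi(M)$ is $\Z_2$-equivariantly \emph{homeomorphic} to $SA_0$ is stronger than what is available: the odd continuous map $SA_0 \to \widetilde A$, $[u,s]\mapsto R\,\pi\bigl((1-|s|)u+sv\bigr)$, need not be injective (radial projections of distinct convex combinations may coincide), and all that monotonicity $(i_2)$ and stability $(i_6)$ actually give is $i(\widetilde A)\ge i(SA_0)=i(A_0)+1$ --- which is exactly what the argument uses, so no more should be claimed.

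Second, and more seriously, your worry about the odd extension $\tilde\gamma$ is a genuine obstruction whenever $A_2\cap(-A_2)$ properly contains $RA_0$: for such a point $w$ both $tw$ and $-tw$ lie in $X$ but outside $A$ when $0<t<1$, so nothing forces $\gamma(-tw)=-\gamma(tw)$, and ``generic $v$'' or ``approximation'' are not substitutes for a proof. The clean fix --- which incidentally also removes the same latent issue from the published proof, where $\varphi$ is defined on $\widetilde A\times[0,1]$ by a case split --- is not to extend $\gamma$ at all but to parametrize the piercing map on the suspension itself. Define $\varphi:SA_0\times[0,1]\to W$ by $\varphi([u,s],t)=\gamma\bigl(tR\,\pi((1-s)u+sv)\bigr)$ for $s\ge 0$ and $\varphi([u,s],t)=-\gamma\bigl(-tR\,\pi((1+s)u+sv)\bigr)$ for $s\le 0$. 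At $s=0$ both formulas reduce to $tRu$ because $\pm tRu$ lie in the cone over $A_0$ where $\gamma$ is the identity; $\varphi$ is therefore well defined, continuous, and odd for the antipodal action $[u,s]\mapsto[-u,-s]$; it satisfies $\varphi(SA_0\times\set{0})=\set{0}$ and $\varphi(SA_0\times\set{1})=\widetilde A$; and $\varphi(SA_0\times[0,1])=\gamma(X)\cup(-\gamma(X))$, which is closed precisely because $\gamma(X)$ is closed by the definition of $\Gamma$ --- resolving the other concern you raised. Applying $(i_7)$ with the half-spaces $\set{\norm{u}\le r}$ and $\set{\norm{u}\ge r}$, and using the contradiction hypothesis together with the symmetry of $B$ and the odd homeomorphism $w\mapsto w/r$, yields $i(A_0)+1\le i\bigl(\varphi(SA_0\times[0,1])\cap rS\bigr)\le i(S\setminus B_0)$, the desired contradiction. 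Your sketch of the deformation argument for the \PS{c} sequence is standard and fine.
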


\begin{remark}
Theorem \ref{Theorem 8}, which does not require a direct sum decomposition, generalizes the linking theorem of Rabinowitz \cite{MR0488128}.
\end{remark}

\section{Preliminaries}

In this preliminary section we prove Proposition \ref{Proposition 1} and Theorem \ref{Theorem 8}.

\begin{proof}[Proof of Proposition \ref{Proposition 1}]
Let $\seq{u_j}$ be a \PS{c} sequence. Then
\begin{equation} \label{4}
\Phi(u_j) = \int_\Omega \left(\frac{1}{p}\, |\nabla u_j|^p + \frac{1}{q}\, |\nabla u_j|^q - \frac{\mu}{q}\, |u_j|^q - \frac{\lambda}{p}\, |u_j|^p - \frac{1}{p^\ast}\, |u_j|^{p^\ast}\right) dx = c + \o(1)
\end{equation}
and
\begin{equation} \label{5}
\Phi'(u_j)\, u_j = \int_\Omega \left(|\nabla u_j|^p + |\nabla u_j|^q - \mu\, |u_j|^q - \lambda\, |u_j|^p - |u_j|^{p^\ast}\right) dx = \o(1) \norm{u_j}.
\end{equation}
So
\[
\int_\Omega \left[\left(\frac{1}{q} - \frac{1}{p}\right)\! \left(|\nabla u_j|^q - \mu\, |u_j|^q\right) + \left(\frac{1}{p} - \frac{1}{p^\ast}\right) |u_j|^{p^\ast}\right] dx = \o(1) \norm{u_j} + \O(1),
\]
and since $q < p < p^\ast$, this and the H\"{o}lder and Young inequalities yield
\[
\int_\Omega |u_j|^{p^\ast}\, dx \le \o(1) \norm{u_j} + \O(1).
\]
Since $p > 1$, it follows from this and \eqref{4} that $\seq{u_j}$ is bounded in $W^{1,p}_0(\Omega)$. So a renamed subsequence converges to some $u$ weakly in $W^{1,p}_0(\Omega)$, strongly in $L^s(\Omega)$ for all $1 \le s < p^\ast$, and a.e.\! in $\Omega$. Then $u$ is a critical point of $\Phi$ by the weak continuity of $\Phi'$.

Suppose $u = 0$. Since $\seq{u_j}$ is bounded in $W^{1,p}_0(\Omega)$ and converges to $0$ in $L^p(\Omega)$, \eqref{5} gives
\[
\o(1) = \int_\Omega \left(|\nabla u_j|^p + |\nabla u_j|^q - |u_j|^{p^\ast}\right) dx \ge \norm{u_j}^p \left(1 - \frac{\norm{u_j}^{p^\ast - p}}{S^{p^\ast/p}}\right)
\]
by \eqref{3}. If $\norm{u_j} \to 0$, then $\Phi(u_j) \to 0$, contradicting $c \ne 0$, so this implies
\[
\norm{u_j}^p \ge S^{N/p} + \o(1)
\]
for a renamed subsequence. Then \eqref{4} and \eqref{5} yield
\[
c = \int_\Omega \left[\left(\frac{1}{p} - \frac{1}{p^\ast}\right) |\nabla u_j|^p + \left(\frac{1}{q} - \frac{1}{p^\ast}\right) |\nabla u_j|^q\right] dx + \o(1) \ge \frac{S^{N/p}}{N} + \o(1),
\]
contradicting $c < S^{N/p}/N$.
\end{proof}

\begin{proof}[Proof of Theorem \ref{Theorem 8}]
First we show that $A$ (homotopically) links $B$ with respect to $X$ in the sense that
\begin{equation} \label{17}
\gamma(X) \cap B \ne \emptyset \quad \forall \gamma \in \Gamma.
\end{equation}
If \eqref{17} does not hold, then there is a map $\gamma \in C(X,W \setminus B)$ such that $\gamma(X)$ is closed and $\restr{\gamma}{A} = \id{A}$. Let
\[
\widetilde{A} = \set{R\, \pi((1 - |t|)\, u + tv) : u \in A_0,\, -1 \le t \le 1}
\]
and note that $\widetilde{A}$ is closed since $A_0$ is closed (here $(1 - |t|)\, u + tv \ne 0$ since $v$ is not in the symmetric set $A_0$). Since
\[
SA_0 \to \widetilde{A}, \quad (u,t) \mapsto R\, \pi((1 - |t|)\, u + tv)
\]
is an odd continuous map,
\begin{equation} \label{18}
i(\widetilde{A}) \ge i(SA_0) = i(A_0) + 1
\end{equation}
by \ref{i2} and \ref{i6} of Proposition \ref{Proposition 7}. Consider the map
\[
\varphi : \widetilde{A} \times [0,1] \to W \setminus B, \quad \varphi(u,t) = \begin{cases}
\gamma(tu), & u \in \widetilde{A} \cap A\\[5pt]
- \gamma(-tu), & u \in \widetilde{A} \setminus A,
\end{cases}
\]
which is continuous since $\gamma$ is the identity on the symmetric set $\set{tu : u \in A_0,\, 0 \le t \le R}$. We have $\varphi(-u,t) = - \varphi(u,t)$ for all $(u,t) \in \widetilde{A} \times [0,1]$, $\varphi(\widetilde{A} \times [0,1]) = \gamma(X) \cup (- \gamma(X))$ is closed, and $\varphi(\widetilde{A} \times \set{0}) = \set{0}$ and $\varphi(\widetilde{A} \times \set{1}) = \widetilde{A}$ since $\restr{\gamma}{A} = \id{A}$. Applying \ref{i7} with $\widetilde{A}_0 = \set{u \in W : \norm{u} \le r}$ and $\widetilde{A}_1 = \set{u \in W : \norm{u} \ge r}$ gives
\begin{equation} \label{19}
i(\widetilde{A}) \le i(\varphi(\widetilde{A} \times [0,1]) \cap \widetilde{A}_0 \cap \widetilde{A}_1) \le i((W \setminus B) \cap S_r) = i(S_r \setminus B) = i(S \setminus B_0),
\end{equation}
where $S_r = \set{u \in W : \norm{u} = r}$. By \eqref{18} and \eqref{19}, $i(A_0) < i(S \setminus B_0)$, contradicting \eqref{16}. Hence \eqref{17} holds.

It follows from \eqref{17} that $c \ge \inf \Phi(B)$, and $c \le \sup \Phi(X)$ since $\id{X} \in \Gamma$. By a standard argument, $\Phi$ has a \PS{c} sequence (see, e.g., Ghoussoub \cite{MR1251958}).
\end{proof}

\begin{remark}
The linking construction in the above proof was used in Perera and Szulkin \cite{MR2153141} to obtain nontrivial solutions of $p$-Laplacian problems with nonlinearities that interact with the spectrum. A similar construction based on the notion of cohomological linking was given in Degiovanni and Lancelotti \cite{MR2371112}. See also Perera et al.\! \cite[Proposition 3.23]{MR2640827}.
\end{remark}

\section{Proofs of Theorems \ref{Theorem 2} and \ref{Theorem 5}}

Fix $u_0 > 0$ in $W^{1,p}_0(\Omega)$ such that $\norm[p^\ast]{u_0} = 1$. Since $q < p < p^\ast$,
\[
\Phi^+(t u_0) = \int_\Omega \left(\frac{t^p}{p}\, |\nabla u_0|^p + \frac{t^q}{q}\, |\nabla u_0|^q - \frac{\mu\, t^q}{q}\, u_0^q - \frac{\lambda\, t^p}{p}\, u_0^p\right) dx - \frac{t^{p^\ast}}{p^\ast} \to - \infty
\]
as $t \to + \infty$. Take $t_0 > 0$ so large that $\Phi^+(t_0 u_0) \le 0$, let
\[
\Gamma = \set{\gamma \in C([0,1],W^{1,p}_0(\Omega)) : \gamma(0) = 0,\, \gamma(1) = t_0 u_0}
\]
be the class of paths joining $0$ and $t_0 u_0$, and set
\[
c := \inf_{\gamma \in \Gamma}\, \max_{u \in \gamma([0,1])}\, \Phi^+(u).
\]

\begin{lemma} \label{Lemma 2}
If $0 < c < S^{N/p}/N$, then problem \eqref{1} has a nonnegative nontrivial solution.
\end{lemma}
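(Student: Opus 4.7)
The plan is to apply the minimax principle to $\Phi^+$ at the level $c$ and then extract a nontrivial nonnegative critical point by arguing as in Proposition \ref{Proposition 1}. Since $\Phi^+(0) = 0$, $\Phi^+(t_0 u_0) \le 0$, and $c > 0$ strictly exceeds both endpoint values, a standard Ekeland/deformation argument on the path space (see Ghoussoub \cite{MR1251958}) produces a \PS{c} sequence $\seq{u_j}$ for $\Phi^+$.

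Next I would verify that the proof of Proposition \ref{Proposition 1} carries over to $\Phi^+$ with only cosmetic changes, yielding a subsequence of $\seq{u_j}$ that converges weakly in $W^{1,p}_0(\Omega)$ to a critical point $u$ of $\Phi^+$. The boundedness argument is formally identical because the subcritical terms in $\Phi^+$ involve $(u_j^+)^q$ and $(u_j^+)^p$, dominated pointwise by $|u_j|^q$ and $|u_j|^p$, so the H\"{o}lder/Young estimate that controls $\int_\Omega (u_j^+)^{p^\ast}\, dx$ goes through unchanged. Weak continuity of $(\Phi^+)'$ then gives that the weak limit $u$ is a critical point of $\Phi^+$.

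To exclude $u = 0$, I would repeat the concentration step. If $u \equiv 0$, then $u_j^+ \to 0$ strongly in $L^p(\Omega)$ and $L^q(\Omega)$, so $(\Phi^+)'(u_j)\, u_j = \o(1) \norm{u_j}$ reduces to
\[
\int_\Omega \left(|\nabla u_j|^p + |\nabla u_j|^q\right) dx = \int_\Omega (u_j^+)^{p^\ast}\, dx + \o(1) \norm{u_j}.
\]
Dropping the nonnegative $|\nabla u_j|^q$ term and using $(u_j^+)^{p^\ast} \le |u_j|^{p^\ast}$ together with \eqref{3} forces either $\norm{u_j} \to 0$ (ruled out by $c > 0$, since then $\Phi^+(u_j) \to 0$) or $\norm{u_j}^p \ge S^{N/p} + \o(1)$. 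Inserting the latter into the identity $\Phi^+(u_j) - \frac{1}{p^\ast}\, (\Phi^+)'(u_j)\, u_j = c + \o(1)$ yields $c \ge S^{N/p}/N$, contradicting the upper bound on $c$.

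Therefore $u$ is a nontrivial critical point of $\Phi^+$, and by the observation in the paragraph between Theorems \ref{Theorem 5} and \ref{Theorem 6}, $u = u^+ \ge 0$ is a nonnegative nontrivial solution of \eqref{1}. The only genuinely technical point is checking that the concentration lower bound from Proposition \ref{Proposition 1} survives the truncation defining $\Phi^+$; this is the part to handle carefully, but it works precisely because the pointwise bound $(u^+)^{p^\ast} \le |u|^{p^\ast}$ preserves the role of the Sobolev constant $S$.
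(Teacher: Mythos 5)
Your proof is correct and follows essentially the same route as the paper: the paper's own proof of Lemma \ref{Lemma 2} simply invokes the mountain pass theorem to produce a \PS{c} sequence for $\Phi^+$ and then notes that ``an argument similar to that in the proof of Proposition \ref{Proposition 1}'' yields weak convergence to a nontrivial critical point. You have merely spelled out the details of that similar argument, correctly identifying that the only points to check are that the pointwise bounds $(u^\pm)^s \le |u|^s$ preserve both the boundedness estimate and the Sobolev constant in the concentration step.
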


\begin{proof}
By the mountain pass theorem, $\Phi^+$ has a \PS{c} sequence $\seq{u_j}$. An argument similar to that in the proof of Proposition \ref{Proposition 1} shows that a subsequence of $\seq{u_j}$ converges weakly to a nontrivial critical point $u$ of $\Phi^+$.
\end{proof}

We have the following upper bounds for $c$.

\begin{lemma} \label{Lemma 3}
Let $\widetilde{\lambda} = \lambda/2$.
\begin{enumroman}
\item \label{Lemma 3.i} If $\int_\Omega |\nabla u_0|^p\, dx > \widetilde{\lambda} \int_\Omega u_0^p\, dx$, then
\[
c \le \frac{1}{N} \left[\int_\Omega \big(|\nabla u_0|^p - \widetilde{\lambda}\, u_0^p\big)\, dx\right]^{N/p} + \left(\frac{1}{q} - \frac{1}{p}\right) \frac{\left[\dint_\Omega \big(|\nabla u_0|^q - \mu\, u_0^q\big)\, dx\right]^{p/(p-q)}}{\left(\widetilde{\lambda} \dint_\Omega u_0^p\, dx\right)^{q/(p-q)}}.
\]
\item \label{Lemma 3.ii} If $\int_\Omega |\nabla u_0|^p\, dx \le \widetilde{\lambda} \int_\Omega u_0^p\, dx$, then
\[
c \le \left(\frac{1}{q} - \frac{1}{p}\right) \frac{\left[\dint_\Omega \big(|\nabla u_0|^q - \mu\, u_0^q\big)\, dx\right]^{p/(p-q)}}{\left(\widetilde{\lambda} \dint_\Omega u_0^p\, dx\right)^{q/(p-q)}}.
\]
\end{enumroman}
\end{lemma}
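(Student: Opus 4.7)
The plan is to test the minimax $c$ against the straight-line path $\gamma(s) = s t_0 u_0$, $s \in [0,1]$, which lies in $\Gamma$ since $\Phi^+(0) = 0$ and $\Phi^+(t_0 u_0) \le 0$. This gives
\[
c \le \max_{s \in [0,1]} \Phi^+(s t_0 u_0) = \max_{t \in [0, t_0]} \Phi^+(t u_0) \le \sup_{t \ge 0} \Phi^+(t u_0);
\]
since $\Phi^+(tu_0) \to -\infty$ as $t \to +\infty$, enlarging $t_0$ (which preserves $\Phi^+(t_0 u_0) \le 0$) makes the middle quantity coincide with the last supremum, so it suffices to estimate $\sup_{t \ge 0} \Phi^+(tu_0)$.

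The crucial device is to split $\lambda = \widetilde\lambda + \widetilde\lambda$ and, using $u_0 > 0$ and $\norm[p^\ast]{u_0} = 1$, to write $\Phi^+(t u_0) = g_1(t) + g_2(t)$ with
\[
g_1(t) = \frac{t^p}{p} \int_\Omega \big(|\nabla u_0|^p - \widetilde\lambda\, u_0^p\big)\,dx - \frac{t^{p^\ast}}{p^\ast},
\]
\[
g_2(t) = \frac{t^q}{q} \int_\Omega \big(|\nabla u_0|^q - \mu\, u_0^q\big)\,dx - \frac{\widetilde\lambda\, t^p}{p} \int_\Omega u_0^p\,dx,
\]
so that $\sup_{t \ge 0}\Phi^+(tu_0) \le \sup_{t \ge 0} g_1 + \sup_{t \ge 0} g_2$. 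Keeping half of $\lambda$ inside $g_2$ is what makes the splitting work: it supplies the negative superlinear $t^p$ term needed to cap the $t^q$ growth of $g_2$ whenever its $t^q$ coefficient is positive.

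Each $g_i$ has the form $f(t) = (A/a) t^a - (B/b) t^b$ with $0 < a < b$ and $B > 0$. Elementary calculus shows that when $A > 0$ the unique positive critical point is $t_\ast = (A/B)^{1/(b-a)}$, at which
\[
\sup_{t \ge 0} f(t) = \left(\frac{1}{a} - \frac{1}{b}\right) \frac{A^{b/(b-a)}}{B^{a/(b-a)}},
\]
while $\sup f = 0$ when $A \le 0$. Applying this to $g_1$ with $(a,b) = (p, p^\ast)$, $B = 1$, and using the identities $1/p - 1/p^\ast = 1/N$ and $p^\ast/(p^\ast - p) = N/p$ yields the first term of the bound in case \ref{Lemma 3.i}; in case \ref{Lemma 3.ii} the $t^p$-coefficient of $g_1$ is non-positive, so $\sup g_1 = 0$ and only the second term survives. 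Applying the same formula to $g_2$ with $(a,b) = (q,p)$, $A = \int_\Omega (|\nabla u_0|^q - \mu u_0^q)\,dx$, and $B = \widetilde\lambda \int_\Omega u_0^p\,dx$ produces the term common to both bounds. There is no serious obstacle; the only subtlety is identifying the correct asymmetric allocation of $\lambda$, which is precisely what decouples the two one-variable optimizations and makes the explicit maxima reproduce the exponents $N/p$ and $p/(p-q)$ appearing in the statement.
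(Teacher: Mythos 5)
Your proof is correct and follows the same approach as the paper: test $c$ against the straight-line path, split $\lambda = \widetilde\lambda + \widetilde\lambda$ so that $\Phi^+(tu_0)$ decouples into $g_1 + g_2$, and optimize each piece separately by elementary calculus. The paper stops at displaying the split and leaves the two one-variable maximizations implicit; you simply spell them out, correctly using $1/p - 1/p^\ast = 1/N$ and $p^\ast/(p^\ast - p) = N/p$, and correctly handle case $(ii)$ by noting $\sup g_1 = 0$ when its $t^p$-coefficient is nonpositive.
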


\begin{proof}
Since $\gamma(s) = s t_0 u_0$ is a path in $\Gamma$,
\begin{multline*}
c \le \max_{s \in [0,1]}\, \Phi^+(s t_0 u_0) \le \max_{t \ge 0}\, \Phi^+(t u_0) \le \max_{t \ge 0}\, \left[\frac{t^p}{p} \int_\Omega \big(|\nabla u_0|^p - \widetilde{\lambda}\, u_0^p\big)\, dx - \frac{t^{p^\ast}}{p^\ast}\right]\\[5pt]
+ \max_{t \ge 0}\, \left[\frac{t^q}{q} \int_\Omega \big(|\nabla u_0|^q - \mu\, u_0^q\big)\, dx - \frac{\widetilde{\lambda}\, t^p}{p} \int_\Omega u_0^p\, dx\right]. \QED
\end{multline*}
\end{proof}

\begin{proof}[Proof of Theorem \ref{Theorem 2}]
Without loss of generality we may assume that $0 \in \Omega$. Let $r > 0$ be so small that $B_{2r}(0) \subset \Omega$, take a function $\psi \in C^\infty_0(B_{2r}(0),[0,1])$ such that $\psi = 1$ on $B_r(0)$, and set
\[
u_\eps(x) = \frac{\psi(x)}{\left(\eps + |x|^{p/(p-1)}\right)^{(N-p)/p}}, \qquad v_\eps(x) = \frac{u_\eps(x)}{\norm[p^\ast]{u_\eps}}
\]
for $\eps > 0$. Then $\norm[p^\ast]{v_\eps} = 1$ and
\begin{gather}
\label{9} \int_\Omega |\nabla v_\eps|^p\, dx = S + \O(\eps^{(N-p)/p}),\\[5pt]
\int_\Omega v_\eps^p\, dx = \begin{cases}
K \eps^{p-1} + \O(\eps^{(N-p)/p}), & p^2 < N\\[5pt]
K \eps^{p-1}\, |\log \eps| + \O(\eps^{p-1}), & p^2 = N\\[5pt]
\O(\eps^{(N-p)/p}), & p^2 > N
\end{cases}
\intertext{for some constant $K > 0$,}
\int_\Omega |\nabla v_\eps|^q\, dx = \begin{cases}
\O(\eps^{N(p-1)(p-q)/p^2}), & q > \frac{N(p - 1)}{N - 1}\\[7.5pt]
\O(\eps^{N(N-p)(p-1)/(N-1)\, p^2}\, |\log \eps|), & q = \frac{N(p - 1)}{N - 1}\\[7.5pt]
\O(\eps^{(N-p)\, q/p^2}), & q < \frac{N(p - 1)}{N - 1},
\end{cases}
\intertext{and}
\label{12} \int_\Omega v_\eps^q\, dx = \begin{cases}
\O(\eps^{(p-1)[Np-(N-p)\, q]/p^2}), & q > \frac{N(p - 1)}{N - p}\\[7.5pt]
\O(\eps^{N(p-1)/p^2}\, |\log \eps|), & q = \frac{N(p - 1)}{N - p}\\[7.5pt]
\O(\eps^{(N-p)\, q/p^2}), & q < \frac{N(p - 1)}{N - p}
\end{cases}
\end{gather}
as $\eps \to 0$ (see, e.g., Dr{\'a}bek and Huang \cite{MR1473856}). We apply Lemma \ref{Lemma 2} with $u_0 = v_\eps$. Since $\mu \le \mu_1$ and by \eqref{6}, \eqref{8}, and \eqref{3},
\[
\Phi^+(u) \ge \frac{1}{p} \left(1 - \frac{\lambda}{\lambda_1}\right) \norm{u}^p - \frac{1}{p^\ast}\, S^{- p^\ast/p} \norm{u}^{p^\ast} \quad \forall u \in W^{1,p}_0(\Omega).
\]
Since $\lambda < \lambda_1$ and $p^\ast > p$, it follows from this that $0$ is a strict local minimizer of $\Phi^+$, so $c > 0$. We will verify that in each case $c < S^{N/p}/N$ for $\eps > 0$ sufficiently small by using Lemma \ref{Lemma 3} \ref{Lemma 3.i} and \eqref{9}--\eqref{12}.

\ref{Theorem 4.i} Since $p^2 < N$ and $q \ge N(p - 1)/(N - p) > N(p - 1)/(N - 1)$, we have
\begin{equation} \label{13}
c \le \frac{1}{N} \left[S - K \widetilde{\lambda} \eps^{p-1} + \O(\eps^{(N-p)/p})\right]^{N/p} + \O(\eps^{(p-1)[N/p-q/(p-q)]}).
\end{equation}
$(N - p)/p > p - 1$ since $p^2 < N$, and $(p - 1)[N/p - q/(p - q)] > p - 1$ since $q < (N - p)\, p/N$, so the desired conclusion follows.

\ref{Theorem 4.ii} Since $N(p - 1)/(N - 1) < q < N(p - 1)/(N - p)$, \eqref{13} still holds, and $(p - 1)[N/p - q/(p - q)] > p - 1$ since $q < (N - p)\, p/N$.

\ref{Theorem 4.iii} Since $q = N(p - 1)/(N - 1) < N(p - 1)/(N - p)$, we have
\[
c \le \frac{1}{N} \left[S - K \widetilde{\lambda} \eps^{p-1} + \O(\eps^{(N-p)/p})\right]^{N/p} + \O(\eps^{N(N-p^2)(p-1)/(N-p)\, p}\, |\log \eps|^{(N-1)\, p/(N-p)}),
\]
and $N(N - p^2)(p - 1)/(N - p)\, p > p - 1$ since $(1 - 1/N)\, p^2 + p < N$.

\ref{Theorem 4.iv} Since $q < N(p - 1)/(N - 1) < N(p - 1)/(N - p)$, we have
\[
c \le \frac{1}{N} \left[S - K \widetilde{\lambda} \eps^{p-1} + \O(\eps^{(N-p)/p})\right]^{N/p} + \O(\eps^{(N-p^2)\, q/p\, (p-q)}),
\]
and $(N - p^2)\, q/p\, (p - q) > p - 1$ since $q > (p - 1)\, p^2/(N - p)$.
\end{proof}

\begin{proof}[Proof of Theorem \ref{Theorem 5}]
We apply Lemma \ref{Lemma 2}. Since $q < p < q^\ast$, $W^{1,p}_0(\Omega) \hookrightarrow W^{1,q}_0(\Omega) \hookrightarrow L^p(\Omega)$ by the H\"{o}lder inequality and the Sobolev imbedding, so
\begin{equation} \label{14}
T = \inf_{u \in W^{1,p}_0(\Omega) \setminus \set{0}}\, \frac{\norm[q]{\nabla u}^q}{\norm[p]{u}^q} \ge \inf_{u \in W^{1,q}_0(\Omega) \setminus \set{0}}\, \frac{\norm[q]{\nabla u}^q}{\norm[p]{u}^q} > 0.
\end{equation}
By \eqref{3}, \eqref{6}, and \eqref{14},
\[
\Phi^+(u) \ge \frac{1}{p} \norm{u}^p - \frac{1}{p^\ast}\, S^{- p^\ast/p} \norm{u}^{p^\ast} + \frac{1}{q} \left(1 - \frac{\mu^+}{\mu_1}\right) \norm[q]{\nabla u}^q - \frac{\lambda}{p}\, T^{-p/q} \norm[q]{\nabla u}^p \quad \forall u \in W^{1,p}_0(\Omega),
\]
where $\mu^+ = \max \set{\mu,0}$. Since $\mu^+ < \mu_1$ and $p^\ast > p > q$, it follows from this that $0$ is a strict local minimizer of $\Phi^+$, so $c > 0$. It is clear from Lemma \ref{Lemma 3} \ref{Lemma 3.ii} that $c < S^{N/p}/N$ for $\lambda > 0$ sufficiently large.
\end{proof}

\section{Proof of Theorem \ref{Theorem 6}}

\begin{proof}[Proof of Theorem \ref{Theorem 6}]
Since $q < p$, $W^{1,p}_0(\Omega) \hookrightarrow W^{1,q}_0(\Omega)$ by the H\"{o}lder inequality. Let $S_p$ denote the unit sphere of $W^{1,p}_0(\Omega)$ and let
\[
\pi_p(u) = \frac{u}{\norm[p]{\nabla u}}, \quad u \in W^{1,p}_0(\Omega) \setminus \set{0}, \qquad \pi_q(u) = \frac{u}{\norm[q]{\nabla u}}, \quad u \in W^{1,q}_0(\Omega) \setminus \set{0}
\]
be the radial projections onto $S_p$ and $S_q$, respectively. Since $\mu \ge \mu_1$, $\mu_k \le \mu < \mu_{k+1}$ for some $k \ge 1$. Then
\begin{equation} \label{20}
i(\pi_q^{-1}(\Psi^{\mu_k})) = i(\pi_q^{-1}(S_q \setminus \Psi_{\mu_{k+1}})) = k
\end{equation}
by \eqref{15}. Set $M = \big\{u \in W^{1,q}_0(\Omega) : \norm[q]{u} = 1\big\}$. By Degiovanni and Lancelotti \cite[Theorem 2.3]{MR2514055}, the set $\pi_q^{-1}(\Psi^{\mu_k}) \cup \set{0}$ contains a symmetric cone $C$ such that $C \cap M$ is compact in $C^1(\Omega)$ and
\begin{equation} \label{21}
i(C \setminus \set{0}) = k.
\end{equation}
Since $W^{1,p}_0(\Omega)$ is a dense linear subspace of $W^{1,q}_0(\Omega)$, the inclusion $\pi_q^{-1}(S_q \setminus \Psi_{\mu_{k+1}}) \cap W^{1,p}_0(\Omega) \incl \pi_q^{-1}(S_q \setminus \Psi_{\mu_{k+1}})$ is a homotopy equivalence by Palais \cite[Theorem 17]{MR0189028}, so
\begin{equation} \label{22}
i(\pi_q^{-1}(S_q \setminus \Psi_{\mu_{k+1}}) \cap W^{1,p}_0(\Omega)) = k
\end{equation}
by \eqref{20}. We apply Theorem \ref{Theorem 8} to our functional $\Phi$ defined in \eqref{2} with
\[
A_0 = \pi_p(C \setminus \set{0}) = \pi_p(C \cap M), \qquad B_0 = S_p \setminus (\pi_q^{-1}(S_q \setminus \Psi_{\mu_{k+1}}) \cap W^{1,p}_0(\Omega)),
\]
noting that $A_0$ is compact since $C \cap M$ is compact and $\pi_p$ is continuous. We have
\[
i(A_0) = i(C \setminus \set{0}) = k
\]
by \eqref{21}, and
\[
i(S_p \setminus B_0) = i(\pi_q^{-1}(S_q \setminus \Psi_{\mu_{k+1}}) \cap W^{1,p}_0(\Omega)) = k
\]
by \eqref{22}, so \eqref{16} holds.

For $u \in S_p$ and $t \ge 0$,
\begin{equation} \label{23}
\Phi(tu) \le \frac{t^q}{q} \int_\Omega \big(|\nabla u|^q - \mu\, |u|^q\big)\, dx - \frac{\widetilde{\lambda}\, t^p}{p} \int_\Omega |u|^p\, dx - \frac{t^p}{p} \left(\widetilde{\lambda} \int_\Omega |u|^p\, dx - 1\right),
\end{equation}
where $\widetilde{\lambda} = \lambda/2$. Pick any $v \in S_p \setminus A_0$. Since $A_0$ is compact, so is the set
\[
X_0 = \set{\pi_p((1 - t)\, u + tv) : u \in A_0,\, 0 \le t \le 1}
\]
and hence
\[
\alpha = \inf_{u \in X_0}\, \int_\Omega |u|^p\, dx > 0, \qquad \beta = \sup_{u \in X_0}\, \int_\Omega \big(|\nabla u|^q - \mu\, |u|^q\big)\, dx < \infty.
\]
Let $\lambda \ge 2/\alpha$, so that $\widetilde{\lambda} \alpha \ge 1$. Then for $u \in A_0 \subset X_0$ and $t \ge 0$, \eqref{23} gives
\begin{equation} \label{24}
\Phi(tu) \le - (\mu - \mu_k)\, \frac{t^q}{q} \int_\Omega |u|^q\, dx \le 0
\end{equation}
since $\mu \ge \mu_k$. For $u \in X_0$ and $t \ge 0$, \eqref{23} gives
\begin{equation} \label{25}
\Phi(tu) \le \frac{\beta\, t^q}{q} - \frac{\widetilde{\lambda} \alpha\, t^p}{p} \le \left(\dfrac{1}{q} - \dfrac{1}{p}\right) \dfrac{(\beta^+)^{p/(p-q)}}{(\widetilde{\lambda} \alpha)^{q/(p-q)}},
\end{equation}
where $\beta^+ = \max \set{\beta,0}$. Fix $\lambda$ so large that the last expression is $< S^{N/p}/N$, take positive $R \ge (p\, \beta^+/q\, \widetilde{\lambda} \alpha)^{1/(p-q)}$, and let $A$ and $X$ be as in Theorem \ref{Theorem 8}. Then it follows from \eqref{24} and \eqref{25} that
\[
\sup \Phi(A) \le 0, \qquad \sup \Phi(X) < \frac{S^{N/p}}{N}.
\]
Since $p < q^\ast$, $W^{1,q}_0(\Omega) \hookrightarrow L^p(\Omega)$ by the Sobolev imbedding, so
\begin{equation} \label{26}
T = \inf_{u \in W^{1,p}_0(\Omega) \setminus \set{0}}\, \frac{\norm[q]{\nabla u}^q}{\norm[p]{u}^q} \ge \inf_{u \in W^{1,q}_0(\Omega) \setminus \set{0}}\, \frac{\norm[q]{\nabla u}^q}{\norm[p]{u}^q} > 0.
\end{equation}
By \eqref{3} and \eqref{26},
\[
\Phi(u) \ge \frac{1}{p} \norm{u}^p - \frac{1}{p^\ast}\, S^{- p^\ast/p} \norm{u}^{p^\ast} + \frac{1}{q} \left(1 - \frac{\mu}{\mu_{k+1}}\right) \norm[q]{\nabla u}^q - \frac{\lambda}{p}\, T^{-p/q} \norm[q]{\nabla u}^p \quad \forall u \in \pi_p^{-1}(B_0).
\]
Since $\mu < \mu_{k+1}$ and $p^\ast > p > q$, it follows from this that if $0 < r < R$ is sufficiently small and $B$ is as in Theorem \ref{Theorem 8}, then
\[
\inf \Phi(B) > 0.
\]
Then $0 < c < S^{N/p}/N$ and $\Phi$ has a \PS{c} sequence by Theorem \ref{Theorem 8}, a subsequence of which converges weakly to a nontrivial critical point of $\Phi$ by Proposition \ref{Proposition 1}.
\end{proof}

\section*{Acknowledgement}

\noindent The authors have been partially supported by the Gruppo Nazionale per l'Analisi Matematica, la Probabilit\`{a} e le loro Applicazioni (GNAMPA) of the Istituto Nazionale di Alta Matematica (INdAM).

\def\cdprime{$''$}

\end{document}